\theoremstyle{remark}{
\newtheorem{Def}{{\rm Definition}}
\newtheorem{Ex}{{\rm Example}}
\newtheorem{Rem}{{\rm Remark}}

}
\theoremstyle{plain}{

\newtheorem{Prop}{Proposition}
\newtheorem{Thm}{Theorem}
\newtheorem{MainThm}{Main Theorem}
\newtheorem{Lem}{Lemma}

}
\begin{document}
\title[Compositions of fold maps having simple structures with projections]{On simple classes of special generic maps and round fold maps and fold maps obtained by composing projections}
\author{Naoki Kitazawa}
\keywords{Singularities of differentiable maps; special generic maps and round fold maps. \\
\indent {\it \textup{2020} Mathematics Subject Classification}: Primary~57R45. Secondary~57R19.}
\address{Institute of Mathematics for Industry, Kyushu University, 744 Motooka, Nishi-ku Fukuoka 819-0395, Japan\\
 TEL (Office): +81-92-802-4402 \\
 FAX (Office): +81-92-802-4405 \\
}
\email{n-kitazawa@imi.kyushu-u.ac.jp}
\urladdr{https://naokikitazawa.github.io/NaokiKitazawa.html}
\maketitle
\begin{abstract}

{\it Fold} maps are fundamental tools in the theory of singularities of differentiable maps and its applications to geometry. They are higher dimensional variants of Morse functions. Classes of {\it special generic} maps and {\it round} fold maps are important classes of fold maps. {\it Special generic} maps are higher dimensional variants of Morse functions on homotopy spheres with exactly two {\it singular points}: canonical projections of unit spheres are special generic. {\it Round} fold maps are Morse functions obtained as doubles of Morse functions, or fold maps such that the set of all the singular points are embeddings and that the images are concentric. In the present paper, we discuss compositions of these maps with canonical projections. For example, we observe that these compositions for special generic maps of simple classes are regarded as round fold maps in considerable cases. We also present round fold maps we cannot represent in this way, seeming to be represented so. Note that such compositions are natural operations in related theory of differentiable maps.

\end{abstract}


\maketitle
\section{Introduction.}
\label{sec:1}
\subsection{Fundamental notions and notation on differentiable maps and differential topology of manifolds and fold maps.}
{\it Fold} maps are fundamental tools in the theory of singularities of differentiable maps and its applications to geometry.
A {\it singular} point $p \in X$ of a differentiable map $c:X \rightarrow Y$ is a point at which the rank of the differential ${dc}_p$ is smaller than  both the dimensions $\dim X$ and $\dim Y$. $S(c)$ denotes the set of all the singular points of $c$ (the {\it singular set} of $c$). $c(S(c))$ is the {\it singular value set} of $c$. $Y-c(S(c))$ is the {\it regular value set} of $c$. 
A point in $Y$ is a {\it singular {\rm (}regular{\rm )} value} if it is a point in the singular (resp. regular) value set of the map.
\begin{Def}
\label{def:1}
A {\it fold} map is a smooth map at each singular point of which we can represent as $(x_1, \cdots, x_m) \mapsto (x_1,\cdots,x_{n-1},{\sum}_{j=n}^{m-i(p)}{x_j}^2-{\sum}_{j=m-i(p)+1}^m {x_j}^2$
for suitable coordinates and an integer $0 \leq i(p) \leq \frac{m-n+1}{2}$.
\end{Def}
We call a singular point of a smooth map represented as in Definition \ref{def:1} for suitable coordinates near the singular point and the value at the point a {\it fold} point for the map.
\begin{Prop}
In Definition \ref{def:1} and for a fold point $p$ for a general smooth map, we can determine $i(p)$ uniquely for each singular point $p$. The restriction to the set consisting of all the singular points of a fixed index is an immersion of an {\rm (}$n-1${\rm )}-dimensional closed and smooth submanifold with no boundary in the $m$-dimensional manifold of the domain.
\end{Prop}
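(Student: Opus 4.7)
The plan is to prove both assertions by working directly with the local normal form of Definition \ref{def:1}. The uniqueness of $i(p)$ is the intrinsic (coordinate-free) half of the argument, and the submanifold/immersion claim is a direct computation from the normal form.

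For the uniqueness of $i(p)$, I will appeal to Sylvester's law of inertia via the intrinsic second derivative. At a fold point $p$ the differential $dc_p$ has rank $n-1$, so $K := \ker(dc_p)$ has dimension $m-n+1$ and the cokernel $C := T_{c(p)}Y / \mathrm{Im}(dc_p)$ has dimension $1$. Sending $(v,w) \in K \times K$ to the class of the second directional derivative of $c$ at $p$ in $C$ defines a symmetric bilinear form $H_p : K \times K \to C$ that does not depend on any choice of extensions (this is the standard intrinsic Hessian at a corank-one critical point, using that the first-order terms of $c$ vanish on $K$). Reading off $H_p$ from the normal form of Definition \ref{def:1}, with the obvious generator of $C$, gives (up to a nonzero real scalar) the quadratic form $\sum_{j=n}^{m-i(p)} x_j^{2} - \sum_{j=m-i(p)+1}^{m} x_j^{2}$ on $K$. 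Its signature is $\{i(p),\, m-n+1-i(p)\}$, which is a coordinate-free invariant by Sylvester; the ambiguity in the generator of $C$ only permutes the two entries, so the smaller one is an invariant. The constraint $i(p) \le (m-n+1)/2$ selects this smaller index, making $i(p)$ unambiguous.

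For the second assertion, I compute the Jacobian of $c$ in the normal form near $p$. It is block structured: the identity on the first $n-1$ entries, plus the gradient of the transverse quadratic form in the last row. The rank drops below $n$ precisely when this gradient vanishes, i.e.\ on $\{x_n = x_{n+1} = \cdots = x_m = 0\}$. Thus, near $p$, the singular set $S(c)$ is a smooth $(n-1)$-dimensional submanifold of $X$, and every nearby singular point is again a fold point of the same index $i(p)$. In particular $p \mapsto i(p)$ is locally constant on $S(c)$, so the subset $S_{i}(c) \subset S(c)$ of singular points of a fixed index $i$ is both open and closed in $S(c)$; since $S(c)$ is closed in $X$, so is $S_{i}(c)$, and it inherits the structure of a closed smooth $(n-1)$-dimensional submanifold of $X$ without boundary. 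In the same coordinates, $c|_{S_{i}(c)}$ takes the form $(x_1,\ldots,x_{n-1}) \mapsto (x_1,\ldots,x_{n-1},0)$, which is a local embedding, so $c|_{S_{i}(c)}$ is an immersion.

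I expect the only genuinely nontrivial point to be the coordinate-independence of $i(p)$: writing out the intrinsic Hessian $H_p$ carefully and checking that it is well defined on $K$ with values in $C$ (so that Sylvester's law actually applies) is what converts the normal-form computation into an invariant statement. Everything else reduces to inspection of Jacobians in the local model.
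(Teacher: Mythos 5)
Your argument is correct, and in fact the paper offers no proof of this Proposition at all --- it is stated as a standard fact from the singularity theory of smooth maps (of the kind treated in Golubitsky--Guillemin), so there is nothing in the text to compare against. Your route is the standard one: the intrinsic Hessian $H_p\colon K\times K\to C$ on the kernel of $dc_p$ with values in the one-dimensional cokernel is well defined precisely because the first derivative is quotiented out, Sylvester's law makes the unordered pair $\{i(p),\,m-n+1-i(p)\}$ a coordinate-free invariant, and the sign ambiguity of the cokernel generator is exactly what the normalization $0\leq i(p)\leq\frac{m-n+1}{2}$ in Definition 1 absorbs --- you identify this correctly as the one genuinely intrinsic point. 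The second half (the singular set is locally $\{x_n=\cdots=x_m=0\}$ in the normal form, the index is locally constant there, hence the fixed-index part is open and closed in the closed set $S(c)$, and the restriction is locally $(x_1,\ldots,x_{n-1})\mapsto(x_1,\ldots,x_{n-1},0)$, an immersion) is a routine inspection of the local model, as you say. The only cosmetic remark: when you assert that nearby singular points have the same index, it is worth saying explicitly that the translated normal form at such a point exhibits the same intrinsic Hessian, so the invariance established in the first half is what makes ``same index'' meaningful there; with that sentence added the proof is complete.
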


We call $i(p)$ here the {\it index} of the singular point $p$. Note that a fold map is a Morse function if and only if $n=1$. 

{\it Special generic} maps and {\it round} fold maps are important classes of fold maps.
\begin{Def} 
\label{def:1}
A {\it special generic} map is a fold map such that $i(p)=0$ for any singular point $p$.
\end{Def}
As simplest examples, Morse functions with exactly two singular points on homotopy spheres in the so-called Reeb's theorem, canonical projections of unit spheres, and so on, are special generic. 

Hereafter, (boundary) connected sums of manifolds, are discussed in the smooth category. 
${\mathbb{R}}^k$ denotes the $k$-dimensional Euclidean space, endowed with the Euclidean metric, and for $x \in {\mathbb{R}}^k$, $||x|| \geq 0$ denotes the distance between $x$ and the origin $0 \in {\mathbb{R}}^k$ or the norm where $x$ is seen as a vector. $D^k:=\{x \in {\mathbb{R}}^{k} \mid ||x|| \leq 1.\}$ denotes the $k$-dimensional unit disc for $k>0$ and $S^k:=\{x \in {\mathbb{R}}^{k+1} \mid ||x||=1.\}$ denotes the $k$-dimensional unit sphere.  
\begin{Ex}
Let $m\geq n$ be positive integers. An $m$-dimensional closed manifold $M$ represented as a connected sum of $l>0$ manifolds $S^{l_j} \times S^{m-l_j}$ ($1 \leq l_j \leq n-1$) admits a special generic map $f:M \rightarrow {\mathbb{R}}^n$ such that $f {\mid}_{S(f)}$ is an embedding and that $f(M)$ is represented as a boundary connected sum of the $l$ manifolds $S^{l_j} \times D^{n-l_j}$ where $1 \leq j \leq l$ is an integer. 
\end{Ex}

Diffeomorphisms on a smooth manifold are assumed to be smooth. We define the {\it diffeomorphism group} of the manifold as the group of all the diffeomorphisms. The structure groups of bundles whose fibers are (smooth) manifolds are subgroups of the diffeomorphism groups unless otherwise stated. In other words the bundles are {\it smooth}. 
The class of {\it linear} bundles is a subclass of the class of smooth bundles. A bundle is {\it linear} if the fiber is regarded as a unit sphere or a unit disc in a Euclidean space and the structure group acts linearly in a canonical way.

\begin{Prop}[\cite{saeki}, \cite{saeki2}, and so on.]
\label{prop:2}
Let $m>n \geq 1$ be integers. 
An $m$-dimensional closed, connected and smooth manifold $M$ admits a special generic map into ${\mathbb{R}}^n$ if and only if the following three hold. 
\begin{enumerate}
\item There exists a smooth surjection $q_f:M \rightarrow W_f$ onto an $n$-dimensional compact and smooth manifold $W_f$ and an immersion $\bar{f}:W_f \rightarrow {\mathbb{R}}^n$. 
\item There exists a small collar neighborhood $C(\partial W_f)$ such that the composition of $q_f {\mid}_{{q_f}^{-1}(C(\partial W_f))}$ with the canonical projection to $\partial W_f$ gives a trivial linear bundle whose fiber is diffeomorphic to $D^{m-n+1}$.
\item $q_f {\mid}_{{q_f}^{-1}(W_f-{\rm Int}(C(\partial W_f)))}$ gives a smooth bundle whose fiber is diffeomorphic to $S^{m-n}$.
\end{enumerate}
Furthermore, we can take a special generic map $f$ as $\bar{f} \circ q_f$ satisfying $q_f(S(f))=\partial W_f$ if $M$ admits a special generic map into ${\mathbb{R}}^n$.
\end{Prop}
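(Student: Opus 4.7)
The plan is to prove both directions using the Stein factorization of $f$, exploiting the definite normal form of an index-$0$ fold point from Definition \ref{def:1}.

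For the necessity direction, given a special generic $f \colon M \rightarrow {\mathbb{R}}^n$, I would construct $W_f$ as the Stein factorization, identifying two points of $M$ whenever they lie in the same connected component of the same fibre of $f$, with $q_f \colon M \rightarrow W_f$ the quotient and $\bar{f}$ the induced map so that $f = \bar{f} \circ q_f$. The first step is to endow $W_f$ with a smooth structure of a compact $n$-manifold with $\partial W_f = q_f(S(f))$ making $\bar{f}$ an immersion. Away from $S(f)$ this is immediate, since each connected component of a regular fibre is a closed $(m-n)$-manifold and $f$ is a local submersion. Near a singular point $p$, the normal form $(x_1, \ldots, x_m) \mapsto (x_1, \ldots, x_{n-1}, \sum_{j=n}^m x_j^2)$ collapses each level sphere $\{\sum_{j \geq n} x_j^2 = c\}$ to a single point in $W_f$, yielding smooth collar coordinates $(x_1, \ldots, x_{n-1}, t)$ on $W_f$ with $t = \sum_{j \geq n} x_j^2 \geq 0$, under which $\bar{f}$ becomes a local embedding. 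Condition (3) then follows since $q_f$ restricted to the interior is a proper submersion whose closed $(m-n)$-dimensional fibres are identified with $S^{m-n}$ by using the definite local model to close them across the boundary, and condition (2) is supplied by the concentric sphere structure $\sum_{j \geq n} x_j^2 = c$ of the local quadratic.

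For the sufficiency direction, given data $(W_f, q_f, \bar{f})$ satisfying (1)--(3), I would set $f := \bar{f} \circ q_f$. Over $W_f - {\rm Int}(C(\partial W_f))$ the map $f$ is a submersion because $q_f$ is a smooth bundle and $\bar{f}$ is an immersion. Over the collar, the trivial linear $D^{m-n+1}$-bundle structure supplies coordinates $(y, v) \in \partial W_f \times D^{m-n+1}$ in which the collar parameter of $C(\partial W_f)$ is a smooth function of $\|v\|^2$; composing with a chart of the immersion $\bar{f}$ at a boundary point produces the local expression $(y_1, \ldots, y_{n-1}, v) \mapsto (y_1, \ldots, y_{n-1}, \|v\|^2)$, which is the definite fold model of Definition \ref{def:1} with $i(p) = 0$. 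Hence every point of $q_f^{-1}(\partial W_f)$ is a fold point of $f$ with index $0$, and there are no other singular points, so $f$ is special generic.

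The main obstacle is the global triviality and linearity claim in condition (2). The normal form at each singular point supplies a canonical linear $D^{m-n+1}$ fibre via the concentric spheres $\sum_{j \geq n} x_j^2 = c$, but one must show that these local structures assemble into a single globally trivial linear bundle over $\partial W_f$, not merely into a linear bundle with potentially nontrivial structure group. This is the technical heart of the results in \cite{saeki, saeki2} and is proved by integrating the canonical radial vector fields supplied by the normal form along the collar direction and carrying out a parametrized uniqueness argument; the index-$0$ hypothesis is essential since only a definite quadratic form on the normal directions yields a canonical radial structure on each fibre. The remaining bookkeeping, consistency of the smooth atlas on $W_f$ across $\partial W_f$ and the immersion property of $\bar{f}$, is routine once the normal form is in place.
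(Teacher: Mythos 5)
The paper does not actually prove this Proposition: it is quoted from \cite{saeki} and \cite{saeki2}, and your Stein-factorization outline is the same route taken in those sources. As a self-contained proof, however, your sketch has two genuine gaps. First, in the sufficiency direction you assert that the trivial linear bundle structure of condition (2) supplies coordinates $(y,v)$ in which the collar parameter of $C(\partial W_f)$ is a smooth function of $||v||^2$. Condition (2) only constrains the composition of $q_f$ with the projection to $\partial W_f$, i.e. the $\partial W_f$-component of $q_f$ over the collar; it says nothing about how $q_f$ meets the collar parameter, so for the given $q_f$ the composition $\bar{f} \circ q_f$ need not have the fold normal form (it need not even be a fold map). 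The standard argument does not use the given $q_f$ as it stands: one constructs a new map from the bundle data, namely $(y,v) \mapsto (y,||v||^2)$ on the $D^{m-n+1}$-bundle part into the collar and the bundle projection on the $S^{m-n}$-bundle part, and then glues the two pieces smoothly along their common boundary; carrying out this construction and the gluing is the actual content of that direction.

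Second, and more seriously, the point you yourself call the main obstacle --- that the local definite normal forms assemble into a globally \emph{trivial} linear $D^{m-n+1}$-bundle over $\partial W_f$ --- is not settled by the mechanism you invoke. Integrating the radial vector fields of the normal form (a parametrized Morse-lemma argument) yields a \emph{linear} structure, i.e. a reduction of the structure group to $O(m-n+1)$; it gives no control whatsoever over global triviality, which is a statement about the kernel bundle $\ker df|_{S(f)}$ over $S(f) \cong \partial W_f$ and does not follow from the normal form. Indeed it can fail for a given special generic map: for the special generic map of the twisted $S^{2}$-bundle over $S^{1}$ into the plane obtained from the fibrewise height function (with reflection monodromy), the kernel bundle over a singular circle is the nonorientable plane bundle, so its disc bundle is linear but not trivial. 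Consequently, any proof of the statement as written cannot simply read triviality off an arbitrary special generic map in the necessity direction; one must either argue at the level of existence of suitable data or settle for linearity as in the cited results. Deferring exactly this point to \cite{saeki} and \cite{saeki2} with a sketch that does not deliver it leaves the proof incomplete at its crux.
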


In the present paper, we mainly consider a simple subclass of special generic maps. 

We review {\it round} fold maps.
\begin{Def}[\cite{kitazawa0.1}, \cite{kitazawa0.2}, \cite{kitazawa0.3}, \cite{kitazawa0.10}, and so on,]
\label{def:3}
A fold map $f$ on an $m$-dimensional closed, connected and smooth manifold $M$ into ${\mathbb{R}}^n$ is said to be {\it round} if either of the following hold.
\begin{enumerate}
\item $n=1$. $f {\mid}_{S(f)}$ is an embedding. There exist a regular value $a \in {\mathbb{R}}^n$ and
 a pair $(\Phi:f^{-1}((-\infty,a]) \rightarrow f^{-1}([a,\infty)),{\phi}_{\Phi}: (-\infty,a] \rightarrow [a,\infty))$ such that $f {\mid}_{f^{-1}([a,\infty))} \circ \Phi={\phi}_{\Phi} \circ f {\mid}_{f^{-1}((-\infty,a])}$.
\item $n=2$. $f {\mid}_{S(f)}$ is an embedding. There exist a diffeomorphism $\phi$ on ${\mathbb{R}}^n$ and an integer $l>0$ such that $\phi(f(S(f)))=\{||x||=r \mid r \in \mathbb{N}, 1 \leq r \leq l\}$.  
\end{enumerate} 
\end{Def}
\begin{Def}[In \cite{kitazawa0.4} these notions are defined first essentially and we change the names of these notions]
\label{def:4}
Let $f:M \rightarrow {\mathbb{R}}^n$ be a round fold map and $\phi$ be the diffeomorphism in Definition \ref{def:3}.
\begin{enumerate}
\item
\label{def:4.1}
$f$ is said to {\it have a globally trivial monodromy} if either of the following holds.
\begin{enumerate}
\item $n=1$.
\item $n \geq 2$ and for a diffeomorphism $\phi$ on ${\mathbb{R}}^n$ and an integer $l>0$ such that $\phi(f(S(f)))=\{||x||=r \mid r \in \mathbb{N}, 1 \leq r \leq l\}$, the composition of the restriction of $\phi \circ f$ to ${(\phi \circ f)}^{-1}(\{||x||=r \mid \frac{1}{2} \leq r\})$ with a canonical map mapping $x$ to $\frac{1}{||x||} x$ gives a trivial bundle over the unit sphere.
\end{enumerate}
\item
\label{def:4.2}
$f$ is said to {\it have a componentwisely trivial monodromy} if either of the following holds. 
\begin{enumerate}
\item $n=1$. 
\item $n \geq 2$, and for a diffeomorphism $\phi$ on ${\mathbb{R}}^n$ and an integer $l>0$ such that $\phi(f(S(f)))=\{||x||=r \mid r \in \mathbb{N}, 1 \leq r \leq l\}$, the composition of the restriction of $\phi \circ f$ to ${(\phi \circ f)}^{-1}(\{||x||=r \mid k-\frac{1}{2} \leq r \leq k+\frac{1}{2} \})$ with a canonical map mapping $x$ to $\frac{1}{||x||} x$ gives a trivial bundle over the unit sphere for each integer $1 \leq k \leq l$. 
\end{enumerate}
\end{enumerate}
\end{Def}
As a simplest example, canonical projections of unit spheres are round and have globally trivial monodromies and componentwisely trivial monodromies.
\subsection{Main Theorems.}
The following three are part of main theorems of the present paper. They are all on compositions of special generic maps or round fold maps satisfying good properties with suitable projections. Composing smooth maps with projections are natural and in various scenes strong methods in the singularity theory and geometric theory of differentiable maps. \cite{mather} is one of pioneering papers on sophisticated methods of the singularity theory of differentiable maps and concentrates on generic properties on singular points or singularities of smooth maps obtained by composing given smooth maps with canonical projections. This is to some extent related to the present study where we do not need to understand the theory so much here. \cite{fukuda} considers compositions of smooth maps of a good class or the class of so-called {\it Morin} maps with canonical projections to the line or the $1$-dimensional Euclidean space to relate topological information of singular sets with the manifolds of the domains via classical and fundamental theory of Morse functions. The class of Morin maps contains the class of fold maps as a simple class.
We do not concentrate on this here. 
\cite{golubitskyguillemin} explains fundamental and sophisticated theory on the singularity theory and geometric theory of differentiable maps systematically including fundamental theory of Morse functions, fold maps and Morin maps, well-known Mather's sophisticated theory including theory related to \cite{mather}, and so on.

${\pi}_{m,n}:{\mathbb{R}}^m \rightarrow {\mathbb{R}}^n$ denotes the canonical projection to the first $n$ components where $m>n \geq 1$.
\begin{MainThm}
\label{mainthm:1}
Let $m>n \geq 2$ and $l>0$ be integers. Let $M$ be an $m$-dimensional closed manifold and $f: M \rightarrow {\mathbb{R}}^n$ be a round fold map satisfying the following three properties.
\begin{enumerate}
\item The index of each singular point is $0$ or $1$. The number of connected components of the singular set is $l+1$.
\item Preimages of regular values of $f$ are disjoint unions of copies of $S^{m-n}$ and the numbers of the connected components of preimages of regular values in the connected component containing $0$ are $l+1$ where $\phi$ denotes the diffeomorphism in {\rm Definition \ref{def:3}}. 
\end{enumerate}
Then by composing $\phi \circ f$ with the canonical projection ${\pi}_{n,n^{\prime}}:{\mathbb{R}}^n \rightarrow {\mathbb{R}}^{n^{\prime}}$, we have a new round fold map satisfying the following two properties.
\begin{enumerate}
\item The index of each singular point is $0$ or $\min\{n-n^{\prime}+1,\frac{m-n^{\prime}+1}{2},(m-n^{\prime}+1)-(n-n^{\prime}+1)\}$. The number of singular points is $2(l+1)$ for $n^{\prime}=1$ and that of connected components of the singular set is $l+1$ for $n^{\prime} \geq 2$.
\item Preimages of regular values are diffeomorphic to $S^{m-n^{\prime}}$ or represented as connected sums of copies of $S^{n-n^{\prime}} \times S^{m-n}$ and the numbers of the copies of $S^{n-n^{\prime}} \times S^{m-n}$ of preimages of regular values in the connected component containing $0$ are $l+1$.
\item The resulting round fold map has a globally trivial monodromy.
\end{enumerate}
\end{MainThm}
In the following two Main Theorems, we need the notion of a fold map {\it represented as a connected sum of fold maps} and a {\it summand} for the family of the fold maps, for example. We define these notions in Definition \ref{def:5}.
\begin{MainThm}
\label{mainthm:2}
In Proposition \ref{prop:2}, let $M$ admit a special generic map $f:M \rightarrow {\mathbb{R}}^n$ such that $n \geq 2$, and represented as a connected sum of a family of finitely many fold maps indexed by $j \in J$ satisfying the following three.
\begin{enumerate}
\item The images of the special generic maps regarded as summands for the family of the maps are smoothly immersed manifolds diffeomorphic to $F_j \times [-1,1]$ where $F_j$ is an {\rm (}$n-1${\rm )}-dimensional closed and connected manifold we can
immerse into ${\mathbb{R}}^n$. 
\item $W_f$ in Proposition \ref{prop:2} is represented as a boundary connected sum of these manifolds $F_j \times [-1,1]$.
\item For each manifold $F_j$, we can take an immersion before so that the composition of the immersion of $F_j$ with the canonical projection to ${\mathbb{R}}^{n_j}$ via ${\pi}_{n,n_j}$ is a round fold map into ${\mathbb{R}}^{n_j}$.
\end{enumerate}
Then by changing $\bar{f}$ to a suitable immersion and composing the new special generic map with ${\pi}_{n,\min \{n_j \mid j \in J.\}}$, we have a round map into ${\mathbb{R}}^{\min \{n_j \mid j \in J.\}}$.
\end{MainThm}
The last one is a corollary to existing and obtained results. Theorem \ref{thm:2} and Lemma \ref{lem:1} are in the next section and the former is also a main theorem. We do not introduce these two rigorously in the present section.
\begin{MainThm}
\label{mainthm:3}
There exists a family of infinitely many $7$-dimensional closed, simply-connected and spin manifolds satisfying the following five.
\begin{enumerate}
\item The integral cohomology rings of these 7-dimensional manifolds are mutually isomorphic. 
\item Distinct 7-dimensional manifolds in the family are not homeomorphic.
\item All manifolds in the family admit round fold maps having globally trivial monodromies and satisfying the property on preimages of regular values for round maps obtained in Theorem \ref{thm:2} with $(m,n,n^{\prime})=(7,6,4)$ and $(i_1,i_2)=(1,1)$.
\item Previous round fold maps cannot be obtained in Theorem \ref{thm:2} with $(m,n,n^{\prime})=(7,6,4)$.
\item Each of these round fold maps cannot be obtained by composing any fold map into ${\mathbb{R}}^5$ satisfying the following properties with the canonical projection. 
\begin{enumerate}
\item A fold map is represented as a connected sum of finitely many fold maps.
\item Each summand for the family of the previous fold maps is a round fold map into ${\mathbb{R}}^5$ as in the assumption of Main Theorem \ref{mainthm:1} or a special generic map in Lemma \ref{lem:1}.
\end{enumerate}
\end{enumerate}
\end{MainThm}
\subsection{The content of the present paper.}
In the second section, we prove Main Theorem \ref{mainthm:1}. We also see that a main ingredient of the proof generalize a main argument of section 6 of \cite{saekisuzuoka}. We also introduce Theorem \ref{thm:1} as a previously obtained and related result by the author. As another main theorem, we prove Theorem \ref{thm:2}.
In the third section, we prove Main Theorem \ref{mainthm:2}. Some of \cite{kitazawa0.4}--\cite{kitazawa0.6} are closely related to this.
In the fourth section, we prove Main Theorem \ref{mainthm:3}. Related to this, we first give a further exposition on Theorem \ref{thm:1} as Theorem \ref{thm:3}. As Remark \ref{rem:3}, we also explain relations between the arguments and ones in \cite{kitazawa0.7}--\cite{kitazawa0.9}, which are on construction of new fold maps such that the compositions with suitable projections are given fold maps. In other words, we consider lifting given fold maps to new fold maps. This construction is an important and explicit work in considering lifting smooth maps to immersions, embeddings or more general generic smooth maps. 

Hereafter, manifolds, maps between manifolds, and so on, are smooth unless otherwise stated.
\section{A proof of Main Theorem \ref{mainthm:1} and related problems, results and remarks.}

\begin{proof}[A proof of Main Theorem \ref{mainthm:1}]
The fact that we have a new round fold map is obvious from the definition of a fold map and fundamental theory. 
The fact on the indices of singular points are obvious from the local forms of the round fold maps.
We consider the preimage of a regular value $p \in {\mathbb{R}}^{n^{\prime}}$ such that the distance between the origin and this is $k<||p||<k+1$ for a non-negative integer $k\leq l$.

Consider the canonical inclusion of ${\mathbb{R}}^{n^{\prime}}$ into ${{\mathbb{R}}^{n^{\prime}}} \times \{0\} \subset {{\mathbb{R}}^{n^{\prime}}} \times {{\mathbb{R}}^{n-n^{\prime}}} = {\mathbb{R}}^n$.
The preimage is regarded as the preimage of $\{p\} \times {{\mathbb{R}}^{n-n^{\prime}}}$ for the original round fold map.
The preimage is also regarded as the boundary of the preimage of the set $\{tp\mid t \geq 1\} \times {\mathbb{R}}^{n-n^{\prime}}$ for the original round fold map: $tp$ is represented as a vector and regarded as a point canonically.

It is diffeomorphic to the boundary of the product of the following two manifolds where the corner is eliminated. 

\begin{enumerate}
\item The preimage of $\{tp\mid t \geq 1\} \times \{0\} \subset \{tp\mid t \geq 1\} \times {\mathbb{R}}^{n-n^{\prime}}$, which is diffeomorphic to a manifold obtained by removing the interiors of $l+1-k$ disjointly and smoothly embedded ($m-n+1$)-dimensional unit discs from a copy of $S^{m-n+1}$.
\item $\{tp\mid t \geq 1\} \times {\mathbb{R}}^{n-n^{\prime}} \bigcap \partial (\phi \circ f)(M)$, which is diffeomorphic to $D^{n-n^{\prime}}$.  
\end{enumerate}

FIGURE \ref{fig:1} shows a case where $(n,l)=(2,1)$ and $p \in \mathbb{R}$ with $0<||p||<1$.

\begin{figure}
\includegraphics[width=40mm]{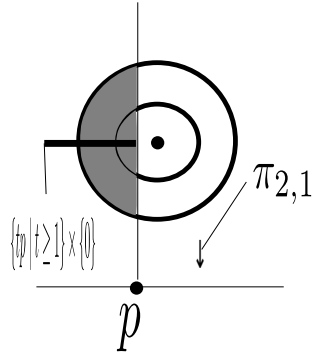}
\caption{The image (the singular value) of $\phi \circ f$ for $(n,k)=(2,1)$ and $p \in \mathbb{R}$ with $0<||p||<1$.}
\label{fig:1}
\end{figure}

From the structure of the original round fold map, the bundle in Definition \ref{def:4} (\ref{def:4.1}) is regarded as the restriction of the bundle over the unit sphere $S^{n-1}$ to the equator and also regarded as the restriction of the bundle over an ($n-1$)-dimensional hemisphere (in the unit sphere) to the boundary. This completes the proof of the third statement.
This completes the proof.

\end{proof}
The following present some examples of given round fold maps in Main Theorem \ref{mainthm:1}.
\begin{Thm}[\cite{kitazawa0.1}, \cite{kitazawa0.2} and \cite{kitazawa0.10}.]
\label{thm:1}
Let $m>n \geq 1$ be integers. 
Let $M$ be an $m$-dimensional manifold represented as a connected sum of $l>0$ total spaces of bundles over $S^n$ whose fibers are diffeomorphic to $S^{m-n}$. Then
there exists a round fold map $f: M \rightarrow {\mathbb{R}}^n$ having a componentwisely trivial monodromy satisfying the following two properties.
\begin{enumerate}
\item The index of each singular point is $0$ or $1$. The number of singular points is $2(l+1)$ for $n=1$ and that of connected components of the singular set is $l+1$ for $n \geq 2$.
\item Preimages of regular values of $f$ for $n=1$ or $\phi \circ f$ for $n \geq 2$ are disjoint unions of copies of $S^{m-n}$ and the numbers of the connected components of preimages of regular values in the connected component containing $p$ are $l+1$ where $\phi$ denotes the diffeomorphism in Definition \ref{def:3}. Furthermore, $p=a$ if $n=1$ and $p=0$ if $n \geq 2$.
\end{enumerate}
\end{Thm}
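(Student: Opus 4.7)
The plan is to construct the round fold map explicitly by assembling pieces over nested annular regions of $\mathbb{R}^n$ and then choosing the gluing data so that the source manifold realises the specified connected sum. I focus on $n \geq 2$ first. Fix increasing radii $0 < r_1 < \cdots < r_{l+1}$ and let $C_k = \{\|x\| = r_k\} \subset \mathbb{R}^n$ be the intended singular value set, declaring $C_{l+1}$ an index-$0$ fold and $C_1, \ldots, C_l$ index-$1$ folds. The local fold surgery rules then force the preimage counts: $l+1$ copies of $S^{m-n}$ over the disk bounded by $C_1$, $l+1-k$ copies over the annulus between $C_k$ and $C_{k+1}$ for $1 \leq k \leq l$, and the empty set outside $C_{l+1}$.

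First I would realise these combinatorics by a piecewise construction. Over the inner disk, take $l+1$ disjoint copies of $D^n \times S^{m-n}$ with the bundle projection as the map. Over each open annulus take the trivial product $S^{n-1} \times (r_k, r_{k+1}) \times \bigl(\bigsqcup^{l+1-k} S^{m-n}\bigr)$ with map $(\theta, r, y) \mapsto r\theta$, which automatically gives the componentwise trivial monodromy. Over a collar of $C_{l+1}$, cap off with $S^{n-1} \times D^{m-n+1}$ collapsing each fiber via the standard index-$0$ model. At each circle $C_k$ with $1 \leq k \leq l$, glue the pieces on the two sides by a family, parameterised by $\theta \in S^{n-1}$, of pair-of-pants cobordisms between $\bigsqcup^{l+1-k} S^{m-n}$ and $\bigsqcup^{l+2-k} S^{m-n}$ realising the local index-$1$ fold model $(x_1,\dots,x_m) \mapsto (x_1,\dots,x_{n-1}, x_n^2+\cdots+x_{m-1}^2-x_m^2)$. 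The freedom in each $\theta$-family is precisely the freedom of specifying an $S^{m-n}$-bundle over $S^n$ via a clutching element of $\pi_{n-1}(\mathrm{Diff}(S^{m-n}))$.

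Next I would identify the resulting source manifold with the given connected sum. Working outward from the innermost piece, crossing $C_1$ combines two of the $l+1$ inner $D^n \times S^{m-n}$ blocks into a piece diffeomorphic to an $S^{m-n}$-bundle over $S^n$ with a small open disk removed; iterating across $C_2, \ldots, C_l$ produces a boundary connected sum of $l$ such bundles, and the final capping at $C_{l+1}$ closes this to the full connected sum. By choosing the clutching data used in the successive $C_k$-gluings, any prescribed collection of $l$ bundles can be realised. For $n = 1$, the same philosophy yields a Morse function symmetric about $a$: one index-$0$ critical point followed by $l$ splitting index-$1$ saddles on the lower half raise the level set from empty to $\bigsqcup^{l+1} S^{m-1}$ just below $a$, and the upper half mirrors this with $l$ merging index-$1$ saddles and one critical point of standard index $m$, giving $2(l+1)$ critical points total with the paper's indices all in $\{0,1\}$.

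The main obstacle is the identification step: verifying that the $l$ bundle summands adjoined by the successive $C_k$-gluings are exactly the prescribed $S^{m-n}$-bundles, with their given clutching maps, and not some other collection of bundles with the same fiber. This reduces to tracking how the clutching element of each newly contributed summand emerges from the chosen $\theta$-family at the corresponding $C_k$, and is a direct check in local coordinates.
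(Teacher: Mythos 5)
The paper itself gives no proof of Theorem \ref{thm:1}: it is quoted from \cite{kitazawa0.1}, \cite{kitazawa0.2} and \cite{kitazawa0.10}, so your proposal can only be measured against the concentric-annulus construction used there, which is indeed the general shape of what you set up. The genuine gap is where you place the clutching data. You assert that componentwisely trivial monodromy is ``automatic'' because the open annuli between the $C_k$ carry product maps, but Definition \ref{def:4} (\ref{def:4.2}) demands triviality of the bundle over $S^{n-1}$ obtained by radially projecting the preimage of the \emph{closed} annulus $\{k-\frac{1}{2}\leq \|x\| \leq k+\frac{1}{2}\}$ that contains the singular sphere $C_k$ --- and that preimage is exactly your $\theta$-parameterized family of fold cobordisms. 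If that family is twisted so as to ``realise'' a nontrivial $S^{m-n}$-bundle summand, then componentwise triviality fails at that annulus; if it is untwisted, your construction as written only produces connected sums of trivial bundles. There is also a type mismatch: a family of cobordisms over $S^{n-1}$ is twisted by $\pi_{n-2}$-type data of the relevant diffeomorphism group, whereas the clutching of an $S^{m-n}$-bundle over $S^n$ is a homotopy class of maps $S^{n-1}\rightarrow \mathrm{Diff}(S^{m-n})$, so ``the freedom in each $\theta$-family'' is not the right kind of datum to carry the bundle.

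The fix, and the placement effectively used in the cited constructions, is to keep every singular-level piece an honest product $S^{n-1}\times V_k$ (with $V_k$ the fold cobordism, mapped by identity times a Morse-type function), so that componentwise triviality really is automatic, and to insert the clutching maps ${\tau}_k:S^{n-1}\rightarrow \mathrm{Diff}(S^{m-n})$ into the attaching diffeomorphisms along the preimages of the \emph{regular} spheres $\{\|x\|=k+\frac{1}{2}\}$ separating consecutive pieces. These identifications lie in no single annulus $\{k-\frac{1}{2}\leq \|x\| \leq k+\frac{1}{2}\}$, hence leave every componentwise monodromy trivial while in general obstructing only the globally trivial monodromy; capping off fiberwise then exhibits the glued manifold as the connected sum of the $l$ prescribed bundles (for $l=1$, the union of the outer pieces is $S^{n-1}\times S^{m-n}\times [0,1]$ attached to the two inner blocks with the twist $\tau$ on one end, which is precisely the clutching description of the bundle over $S^n$). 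Since you defer exactly this identification step as ``a direct check in local coordinates,'' the crux of the theorem --- realizing arbitrary clutching data while retaining componentwisely trivial monodromy --- is not established in your proposal; the $n=1$ case and the index and preimage bookkeeping are fine.
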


\begin{Rem}
\label{rem:1}
Related to the argument here, Asano \cite{asano} considered and solved a more general problem for a good class of smooth maps into the plane on $4$-dimensional closed, connected and orientable manifolds. This study concentrates on the class of so-called {\it trisection maps} into the plane. The image of such a map is a smoothly embedded copy of the $2$-dimensional unit disc and the singular value set is, topologically, embedded concentric circles and may have finitely many {\it cusps} where the singular sets are $1$-dimensional closed and smooth submanifolds with no boundaries and where all singular points except (preimages of) cusps just before are fold points. One of the problems questions, for a so-called {\it generic} arc properly embedded in the image, to which $3$-dimensional closed manifold the preimage is diffeomorphic?
For general theory of the class of trisection maps, see \cite{gaykirby} and as a recent work, see also \cite{baykursaeki} for example. 
\end{Rem}
\begin{Def}
\label{def:5}
A fold map $f$ on an $m$-dimensional closed and connected manifold $M$ into ${\mathbb{R}}^n$ is said to be {\it represented as a connected sum} of two fold maps if there exist a hyperplane $H={\mathbb{R}}^{n-1} \times \{0\} \subset {\mathbb{R}}^n$ with its tubular neighborhood ${\mathbb{R}}^{n-1} \times [-1,1]=H \times [-1,1]$ and a diffeomorphism $\phi$ from a submanifold $S \times [-1,1] \subset {\mathbb{R}}^n$ regarded as the total space of a product bundle over $S$, diffeomorphic to ${\mathbb{R}}^{n-1}$, onto $H \times [-1,1]$, satisfying the following three where $f_S$ denotes the restriction of $f$ to $f^{-1}(S \times [-1,1])$.
\begin{enumerate}
\item The restriction of $\phi$ to $S \times \{t\}$ is a diffeomorphism onto $H \times \{t\}$ for any $t \in [-1,1]$.  
\item There exists a diffeomorphism ${\Phi}_H:S^{m-1} \times [-1,1] \rightarrow f^{-1}(S \times [-1,1])$ such that the restriction of ${\Phi}_H$ to $S^{m-1} \times \{t\}$ is a diffeomorphism onto $f^{-1}(S \times \{t\})$.
\item ${\phi} \circ f_S \circ {\Phi}_H$ is the canonical projection of the ($m-1$)-dimensional unit sphere into the ($n-1$)-dimensional Euclidean space if we restrict this to each $S^{m-1} \times \{t\}$, the target to each ${\mathbb{R}}^{n-1} \times \{t\}$, and identify them with $S^{m-1}$ and ${\mathbb{R}}^{n-1}$ in canonical ways.
\end{enumerate}
\end{Def}
By decomposing the fold map into two smooth maps and by attaching copies of the restriction of the canonical projection to the hemisphere which is the preimage of the subspace of the target regarded as the $n$-dimensional half-space in a suitable way, we have two fold maps such that the original manifold is represented as a connected sum of the two resulting manifolds. Note that the target of the projection of the hemisphere is the $n$-dimensional half-space. We call each of these fold maps a {\it summand} of $f$ for the pair of the fold maps. We can define these notions where we need to consider a family of fold maps consisting of more than $2$ fold maps and finitely many ones.

\begin{Lem}
\label{lem:1}
In Proposition \ref{prop:2}, let $M$ admit a special generic map $f:M \rightarrow {\mathbb{R}}^n$ such that $n \geq 2$, and that $W_f$ is represented as a boundary connected sum of a copy of $S^{n-1} \times I$ where $I$ denotes a closed interval.
Then by changing $\bar{f}$ to a suitable embedding and composing the new special generic map with ${\pi}_{n,n-1}$, we have a round fold map having a componentwisely trivial monodromy in the assumption of Main Theorem \ref{mainthm:1}.
\end{Lem}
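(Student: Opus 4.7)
The plan is to use the identification $W_f \cong S^{n-1} \times I$ to choose a concrete radial embedding $\bar f : W_f \to \mathbb{R}^n$ whose image is a closed round annulus, and then verify directly that the composition $g := \pi_{n,n-1} \circ \bar f \circ q_f$ realizes the assumptions of Main Theorem \ref{mainthm:1} together with componentwisely trivial monodromy. Explicitly, I would identify $W_f$ with $S^{n-1} \times [1, 2]$ and set $\bar f(u, r) := r u$, so that $\bar f(W_f) = \{x \in \mathbb{R}^n \mid 1 \leq \|x\| \leq 2\}$ and the two components of $\bar f(\partial W_f)$ are the unit sphere and the sphere of radius $2$. The new special generic map $f' := \bar f \circ q_f$ still satisfies Proposition \ref{prop:2} with the same $q_f$.

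Next, I would locate $S(g)$ and determine its fold structure. On the interior of $\bar f(W_f)$, $\bar f$ is a local diffeomorphism and $q_f$ is a submersion, so $g$ is a submersion there. At a singular point of $f'$ whose image lies on one of the two boundary spheres $\{\|x\| = r\}$, the tangent space of that sphere at $x$ is $x^{\perp}$, and the restriction of $d\pi_{n,n-1}$ to $x^{\perp}$ is surjective unless the unit vector $e_n$ lies in $x^{\perp}$, i.e., $x_n = 0$. Consequently $S(g)$ is the $q_f$-preimage of the two equators $E_r := \{x \in \mathbb{R}^n \mid \|x\| = r,\ x_n = 0\} \cong S^{n-2}$ for $r \in \{1, 2\}$. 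In collar coordinates $(v, u) \in D^{m-n+1} \times S^{n-1}$ with $q_f(v, u) = (u, \|v\|^2)$ and $\bar f(u, s) = (r \pm s)\, u'$, a Hessian computation in the cokernel direction of $dg$ yields a non-degenerate quadratic form of signature $(m-n+1, 1)$ at the inner sphere ($r = 1$) and $(0, m-n+2)$ at the outer sphere ($r = 2$); after a sign reversal on the cokernel coordinate of $\mathbb{R}^{n-1}$ the latter becomes the form of an index $0$ fold point in the sense of Definition \ref{def:1}, while the former is an index $1$ fold point.

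The resulting singular value set is $\pi_{n,n-1}(E_1) \cup \pi_{n,n-1}(E_2) = \{\|y\| = 1\} \cup \{\|y\| = 2\} \subset \mathbb{R}^{n-1}$, so $g$ is round with $l + 1 = 2$ components of singular set. For a regular value $y$ with $1 < \|y\| < 2$, the set $\pi_{n,n-1}^{-1}(y) \cap \bar f(W_f)$ is a single closed arc with both endpoints on the outer sphere, so $g^{-1}(y)$ is one copy of $S^{m-n+1}$; for $\|y\| < 1$ it is a disjoint union of two arcs, each with one endpoint on each boundary sphere, producing two copies of $S^{m-n+1} = S^{m-(n-1)}$. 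Both the index data and the preimage counts then match the assumptions of Main Theorem \ref{mainthm:1} with $l = 1$, and the inner region contains the origin. The entire construction is invariant under the $O(n-1)$-action on $\mathbb{R}^n$ rotating the first $n-1$ coordinates, and the stabilizer of any point of $S^{n-2}$ acts trivially on the corresponding preimage; this trivializes the bundle described in Definition \ref{def:4} (\ref{def:4.2}) over the shell $\{1/2 \leq \|y\| \leq 3/2\}$ and yields the componentwisely trivial monodromy.

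The main obstacle I anticipate is the clean verification of the fold normal form in the second paragraph: one must choose coordinates simultaneously adapted to the Morse form of $q_f$ along a collar of $\partial W_f$, to a chart on the boundary sphere at an equator point, and to the projection $\pi_{n,n-1}$, and then check that the assembled Hessian has precisely the non-degenerate signature required by Definition \ref{def:1}.
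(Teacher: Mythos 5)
Your overall route is the same as the paper's: choose a concrete embedding $\bar f$ so that the boundary spheres of the image project to concentric spheres, compose with ${\pi}_{n,n-1}$, locate $S(g)$ on the preimages of the equators, compute the fold indices ($0$ along the outer sphere, $1$ along the inner one) and identify the regular fibers as copies of $S^{m-n+1}=S^{m-(n-1)}$; those computations are sound and match what the paper does. The genuine gap is the last step, where you deduce the componentwisely trivial monodromy from an $O(n-1)$-symmetry of ``the entire construction''. That symmetry acts on the target and on the image $\bar f(W_f)$, but not on $M$: in Proposition \ref{prop:2} the map $q_f$ over the complement of the collar is only a smooth bundle with fiber $S^{m-n}$ and structure group the diffeomorphism group, and it may be a nontrivial bundle over $W_f\simeq S^{n-1}$ carrying no compatible lift of the $O(n-1)$-action; asserting that the stabilizer of a point of $S^{n-2}$ acts trivially on the corresponding preimage presupposes an action on $M$ that has not been constructed. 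The conclusion is still true, but for the reason the paper gives (by the argument for the last property in Main Theorem \ref{mainthm:1}): for each singular value sphere, the relevant bundle over $S^{n-2}$ in Definition \ref{def:4} (\ref{def:4.2}) is identified with the restriction, to the equator of the corresponding boundary sphere of $W_f$, of a bundle that is defined over an $(n-1)$-dimensional hemisphere, hence over a contractible base, and is therefore trivial. You need this extension-over-a-hemisphere argument (or the main ingredient of section 6 of the Saeki--Suzuoka paper, which the paper also invokes) in place of the equivariance claim.

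A secondary discrepancy concerns scope. You treat $W_f$ as a single copy of $S^{n-1}\times I$ embedded as the round annulus $\{1\le \|x\| \le 2\}$. The paper's own proof, and its later use (Theorem \ref{thm:2} and the sentence immediately following the lemma), read the hypothesis as a boundary connected sum of finitely many copies of $S^{n-1}\times I$, and accordingly embed $\bar f(W_f)$ as the unit disc with the interiors of finitely many disjointly embedded smaller discs removed, with all centers on the axis killed by the projection and with pairwise distinct diameters, so that all the resulting equators project to concentric spheres of distinct radii. Your radial annulus only realizes the case $l=1$ and does not generalize verbatim; for several summands you would need this disc-with-holes configuration, after which your index and fiber analysis goes through near each boundary sphere exactly as you wrote it.
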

\begin{proof}
By a diffeomorphism on ${\mathbb{R}}^n$, we can map $\bar{f}(W_f)$ to a submanifold obtained by removing the interiors of finitely many copies of the unit disc smoothly and disjointly embedded into the interior of the unit disc whose center is the origin. We take the embedded copies so that they are discs of fixed diameters, that their centers locate at points in an axis, and that for distinct copies, the diameters are distinct. By composing the resulting special generic map with a suitable projection, we have
a round fold map satisfying the conditions assumed in Main Theorem \ref{thm:1}. By arguments similar to one in the proof of the last property of the resulting round fold map in Main Theorem \ref{mainthm:1}, the resulting round fold map has a componentwisely trivial monodromy. To complete the proof, we may also apply the main ingredient of section 6 of \cite{saekisuzuoka}.
\end{proof}
This special generic map is regarded as one represented as a connected sum of finitely many special generic maps: for each summand $f_a$ for the family of fold maps, $W_{f_a}$ is diffeomorphic to $S^{n-1} \times I$. 

The following theorem is a newly obtained theorem in the present paper.
\begin{Thm}
\label{thm:2}
In Proposition \ref{prop:2}, let $m>n \geq 2$ and $M$ admit a special generic map $f:M \rightarrow {\mathbb{R}}^n$ such that 
$W_f$ is diffeomorphic to the $n$-dimensional unit disc or a manifold of the form $S^k \times D^{n-k}$ where $k$ is an integer satisfying $1 \leq k \leq n-1$, or a special generic map $f:M \rightarrow {\mathbb{R}}^n$ represented as a connected sum of a family of ${\Sigma}_{k=1}^{n-1} i_k>1$ fold maps satisfying the following two.
\begin{enumerate}
\item $i_k$ is a non-negative integer.
\item The images of exactly $i_k$ special generic maps regarded as summands for the family of the maps are smoothly immersed manifolds diffeomorphic to $S^{n-k} \times D^k$. $W_f$ in Proposition \ref{prop:2} is represented as a boundary connected sum of these manifolds.
\end{enumerate}
By changing $\bar{f}$ to a suitable embedding and composing the new special generic map with ${\pi}_{n,n^{\prime}}$, we have a round fold map having a globally trivial monodromy in the former case and a fold map satisfying the following two in the latter case.
\begin{enumerate}
\item The fold map is represented as a connected sum of finitely many fold maps and summands for the family of these fold maps satisfy the following two.
\begin{enumerate}
\item If $n^{\prime} \leq \max\{n-k \mid i_k>0\}$, then exactly one of the summands is a round fold map.
\item For the remaining summands, they are special generic and the images of exactly $i_k$ special generic maps are smoothly embedded manifolds diffeomorphic to $S^{n-k} \times D^{n^{\prime}-n+k}$ for $n-n^{\prime}+1 \leq k \leq n-1$. $W_f$ in Proposition \ref{prop:2} is represented as a boundary connected sum of these manifolds.
\end{enumerate}
\item Moreover, the round fold map before satisfies the following properties.
\begin{enumerate}
\item The singular value set can be of the form $\{x \in {\mathbb{R}}^{n^{\prime}} \mid ||x|| \in \mathbb{N}, 1 \leq ||x|| \leq {\Sigma}_{k=1}^{n-n^{\prime}} i_k+1.\}$ by composing the round fold map with a suitable diffeomorphism $\phi$ as in Definition \ref{def:3} if we need.
\item Consider going straight from a point in the complementary set of the image to the origin in the target of the round fold map. Then in the $j$-th connected component of the intersection of the image and the regular value set, the preimage of a point there is diffeomorphic to a manifold in the following list.
\begin{enumerate}
\item If $j=1$, then it is diffeomorphic to $S^{m-n^{\prime}}$.
\item If ${\Sigma}_{j^{\prime}=1}^{j_0} i_{j^{\prime}}+1 < j \leq {\Sigma}_{j^{\prime}=1}^{j_0+1} i_{j^{\prime}}+1$ where $0 \leq j_0< n-n^{\prime}-1$, then it is diffeomorphic to a manifold represented as a connected sum of the manifolds satisfying the following two.
\begin{enumerate}
\item The family of the manifolds consists of exactly $j-1$ manifolds.
\item The family contains exactly $i_{j^{\prime}}$ manifolds diffeomorphic to $S^{j^{\prime}} \times S^{m-n^{\prime}-j^{\prime}}$ for $1 \leq j^{\prime}<j_0$ and exactly $j-{\Sigma}_{j^{\prime}=1}^{j_0} i_{j^{\prime}}-1$
 manifolds diffeomorphic to $S^{j_0} \times S^{m-n^{\prime}-j_0}$.
\end{enumerate} 
\item ${\Sigma}_{j^{\prime}=1}^{n-n^{\prime}-1} i_{j^{\prime}}+1 < j \leq {\Sigma}_{j^{\prime}=1}^{n-n^{\prime}} i_{j^{\prime}}+1$, then it is diffeomorphic to the disjoint union of a manifold represented as a connected sum of the manifolds
 satisfying the following two and $j-{\Sigma}_{j^{\prime}=1}^{n-n^{\prime}-1} i_{j^{\prime}}-1$ copies of $S^{m-n^{\prime}}$.
\begin{enumerate}
\item The family of the manifolds consists of exactly ${\Sigma}_{j^{\prime}=1}^{n-n^{\prime}-1} i_{j^{\prime}}$ manifolds.
\item The family contains exactly $i_{j^{\prime}}$ manifolds diffeomorphic to $S^{j^{\prime}} \times S^{m-n^{\prime}-j^{\prime}}$ for $1 \leq j^{\prime} \leq n-n^{\prime}-1$.
\end{enumerate}
\end{enumerate}
\item The round fold map has a componentwisely trivial monodromy.
\end{enumerate}
\end{enumerate}
\end{Thm}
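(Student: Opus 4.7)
The plan is to apply Proposition \ref{prop:2} to factor the special generic map as $f=\bar{f}\circ q_f$ and then to choose the immersion $\bar{f}$---in fact an embedding---so that $\pi_{n,n^{\prime}}\circ\bar{f}\circ q_f$ exhibits the prescribed round-fold structure. In the former (single-piece) case, if $W_f\cong D^n$ I would take $\bar{f}$ as the standard embedding of the unit disc, so that the composition with the projection is the canonical round fold map into $\mathbb{R}^{n^{\prime}}$; if $W_f\cong S^k\times D^{n-k}$, I would embed $W_f$ so that the $D^{n-k}$ factor lies concentrically in the first $n^{\prime}$ coordinates while the $S^k$ factor spreads across the remaining directions, yielding a singular value set in $\mathbb{R}^{n^{\prime}}$ consisting of one or two concentric spheres. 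Global monodromy triviality then follows from exactly the same vertical-hemisphere extension used in the proof of Main Theorem \ref{mainthm:1}: the monodromy bundle over $S^{n^{\prime}-1}$ extends to a bundle over an $n^{\prime}$-dimensional hemisphere and is therefore trivial.

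For the latter (connected-sum) case, I would use Definition \ref{def:5} to realise $f$ as an iterated connected sum of single-piece models, one of type $S^{n-k}\times D^k$ for each of the $i_k$ pieces of each size. The pieces split into two classes according to whether $n-k\geq n^{\prime}$ or $n-k<n^{\prime}$: the first class has its sphere factor surviving projection as a concentric shell in $\mathbb{R}^{n^{\prime}}$, while for the second class the sphere factor remains embedded and its disc factor shrinks to $D^{n^{\prime}-n+k}$. Embedding the first class concentrically at successive integer radii $1,2,\dots,\sum_{k\leq n-n^{\prime}} i_k+1$ arranges the projected singular spheres into the stated concentric system and yields the unique round-fold summand, while embedding the second class along radial directions off to the side of the central nest gives the remaining special-generic summands with the announced images $S^{n-k}\times D^{n^{\prime}-n+k}$. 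The preimage over a point in the $j$-th annulus is then, by the bundle description of Proposition \ref{prop:2}, an $S^{m-n}$-bundle over the $(n-n^{\prime})$-dimensional vertical slice $\pi_{n,n^{\prime}}^{-1}(\mathrm{pt})\cap\bar{f}(W_f)$, collared near the boundary by a $D^{m-n+1}$-bundle; each shell the slice crosses contributes, via a standard handle-attachment argument analogous to the one in the proof of Main Theorem \ref{mainthm:1}, exactly one $S^{j^{\prime}}\times S^{m-n^{\prime}-j^{\prime}}$ summand (with $j^{\prime}=n-k$ for the piece being crossed). Iterating yields the stated $j_0$-indexed inductive connected-sum formula; in the outermost strip the slice additionally meets the far side of the concentric nest, producing the extra disjoint copies of $S^{m-n^{\prime}}$ that the connected sum cannot absorb. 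Componentwise monodromy triviality is then immediate by the same hemispherical extension applied to each annulus separately.

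The main obstacle is the careful handle-attachment bookkeeping in the preimage formula: one must verify that each shell crossing contributes exactly the announced single summand, that the transition from the $j_0$-strip to the $(j_0+1)$-strip produces no spurious factors as $j$ increases past $\sum_{j^{\prime}=1}^{j_0+1} i_{j^{\prime}}+1$, and that the outermost-strip slice cleanly separates into a connected-sum part plus disjoint $S^{m-n^{\prime}}$ copies. This forces a specific choice of embedding in which the radial lines from the origin of $\mathbb{R}^{n^{\prime}}$ meet each $S^{n-k}\times D^k$ piece transversally and in a prescribed inside-to-outside order, after which the enumeration is a straightforward induction on $j$ and the connected-sum decomposition of the resulting fold map into its round summand and its special-generic summands follows directly from Definition \ref{def:5}.
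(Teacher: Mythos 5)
Your overall plan coincides with the paper's: choose $\bar{f}$ so that $f$ splits, in the generalized sense of Definition \ref{def:5}, into the summands with $k\leq n-n^{\prime}$, which are stacked so that their silhouettes under ${\pi}_{n,n^{\prime}}$ form concentric spheres and together give the unique round summand (the paper does this by invoking Lemma \ref{lem:1} and Main Theorem \ref{mainthm:1} rather than redoing the computation), while the summands with $k\geq n-n^{\prime}+1$ are placed with their disc factors absorbing the suppressed directions and stay special generic with images $S^{n-k}\times D^{n^{\prime}-n+k}$; the monodromy statements come from the same hemisphere-extension argument. Up to that point you are reproducing the paper's construction by hand.

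The gap is in the fibre bookkeeping, which you yourself single out as the crux. With the placement you describe (the sphere factor $S^{n-k}$ of a piece with $k\leq n-n^{\prime}$ positioned so that its silhouette is a concentric sphere of ${\mathbb{R}}^{n^{\prime}}$), the vertical slice ${\pi}_{n,n^{\prime}}^{-1}(p)\cap \bar{f}(W_f)$ meets that piece in a tubular neighborhood of the \emph{sliced} core sphere: the core $S^{n-k}$ loses $n^{\prime}$ dimensions, so the slice component is a copy of $S^{n-n^{\prime}-k}\times D^{k}$, and the $\partial(V\times D^{m-n+1})$ description coming from Proposition \ref{prop:2} gives a summand $S^{n-n^{\prime}-k}\times S^{m-n+k}$, not $S^{n-k}\times S^{m-n^{\prime}-(n-k)}$ as your parenthetical $j^{\prime}=n-k$ asserts. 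Keeping the whole sphere factor would require the core sphere to lie in a single fibre of ${\pi}_{n,n^{\prime}}$, which is impossible in the relevant range ($k\leq n-n^{\prime}$ forces $n-k+1>n-n^{\prime}$), and in the paper's own featured case $(m,n,n^{\prime})=(7,6,4)$, $k=1$ your formula produces the meaningless $S^{5}\times S^{-2}$ where the correct answer is $S^{1}\times S^{2}$. The same slice computation shows that the extra disjoint copies of $S^{m-n^{\prime}}$ arise from crossing the shells of the $k=n-n^{\prime}$ pieces, whose sliced core is $S^{0}$, hence in the innermost components (largest $j$), not from the slice ``meeting the far side of the nest'' in the outermost strip; and in the former case your embedding prescription is reversed (it is the sphere factor, not the disc factor, that must be positioned so that its silhouette is concentric, which also constrains the admissible $n^{\prime}$). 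Until the crossing-by-crossing contribution is corrected in this way, the enumeration in part (2)(b) of the statement is not established by your argument.
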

\begin{proof}
We can change the original special generic map $f$ by changing $\bar{f}$ into a suitable immersion so that the following properties fold. 

\begin{enumerate}
\item In a generalized case of Definition \ref{def:5}, we have finitely many hyperplanes $S$ each of which is of the form ${\mathbb{R}}^{n-1} \times \{a_i\}={\mathbb{R}}^n$ and we have a family of fold maps.
\item Summands for the family satisfy the following properties.
\begin{enumerate}
\item For one of the summand, in a generalized case of Definition \ref{def:5}, we have ${\Sigma}_{k=1}^{k=n-n^{\prime}} i_k$ hyperplanes each of which is of the form ${\mathbb{R}}^{n-1} \times \{b_i\}={\mathbb{R}}^n$ after composing the map with a suitable diffeomorphism on the target. Thus we have another family of fold maps. For the summands for the newly obtained family, there exist exactly $i_k$ special generic maps whose images are embedded manifolds diffeomorphic to $S^{n-k} \times D^k$ for $1 \leq k \leq n-n^{\prime}$.
\item For the remaining summands, there exist exactly $i_k$ special generic maps whose images are embedded manifolds diffeomorphic to $S^{n-k} \times D^k$ for $n-n^{\prime}+1 \leq k \leq n-1$. We also regard $S^{n-k} \times \{0\} \subset {\mathbb{R}}^{n-k} \times \{0\} \times I_k \subset {\mathbb{R}}^{n-k} \times {\mathbb{R}}^{n-n^{\prime}} \times I_k \subset {\mathbb{R}}^{n^{\prime}-1} \times {\mathbb{R}}^{n-n^{\prime}} \times I_k$ and $S^{n-k} \times D^k$ as a smoothly and naturally embedded compact manifold in the interior of ${\mathbb{R}}^{n-k} \times {\mathbb{R}}^{k-1} \times I_k={\mathbb{R}}^{n-1} \times I_k$ where $I_k$ denotes a closed interval in $\mathbb{R}$. 
\end{enumerate}
\end{enumerate}
After taking $\bar{f}$ suitably, we compose the resulting special generic map with the canonical projection to ${\mathbb{R}}^{n^{\prime}}$, identified canonically and suitably with ${\mathbb{R}}^{n^{\prime}-1} \times \{0\} \times I_k$. By applying Main Theorem \ref{mainthm:1} and Lemma \ref{lem:1} and respecting the structures of the special generic map and its image, we have a round map, which is also a desired map. This completes the proof.
\end{proof}
\begin{Rem}
\label{rem:2}
One of pioneering studies studying preimages of regular values for (so-called {\it generic}) maps obtained by composing given generic maps with projections is, presented in section 6 of \cite{saekisuzuoka}. This shows that preimages of regular values for (generic) maps obtained by composing special generic maps with projections are disjoint unions of copies of spheres. 
In Lemma \ref{lem:1}, specific cases of this explicit study appear. 
\end{Rem}

\section{A proof of Main Theorem \ref{mainthm:2} and related expositions.}
Hereafter, for a set $X$ $\sharp X$ denotes the cardinality of $X$. 
\begin{proof}[A proof of Main Theorem \ref{mainthm:2}.]
We can change the original special generic map $f$ by changing $\bar{f}$ into a suitable immersion so that the following properties fold. 

\begin{enumerate}
\item In a generalized case of Definition \ref{def:5}, we have $\sharp J-1$ hyperplanes $S$ each of which is of the form ${\mathbb{R}}^{n-1} \times \{a_i\}={\mathbb{R}}^n$ and we have a family of fold maps.
\item For the summands, there exist exactly $\sharp J$ special generic maps whose images are immersed manifolds diffeomorphic to $F_j \times I=F_j \times [-1,1]$ for each $j \in J$ and we can consider identifications between the domains of the immersions and $F_j \times I$ and make the situation satisfying the following two by virtue of the assumption on the immersion of $F_j$.
\begin{enumerate}
\item The compositions of the restrictions of the immersions to $F_j \times \{1\}$ and $F_j \times \{-1\}$ with the canonical projection to ${\mathbb{R}}^{n_j}$ round fold maps.
\item For the round fold maps just before, the disjoint union of singular value sets is of the form of the singular value set of a round fold map or in short concentric.  
\end{enumerate}
\end{enumerate}
After taking $\bar{f}$ suitably, we compose the resulting special generic map with the canonical projection to ${\mathbb{R}}^{\min \{n_j \mid j \in J.\}}$. 
Points which are not singular points of the special generic map are not singular for the resulting map. From the structure of the map and especially, that of the image of the special generic map, the resulting map is a desired round fold map.
\end{proof}
We discuss examples for Main Theorem \ref{mainthm:2}. For a manifold represented as a connected sum of finitely many copies of $S^{n^{\prime}} \times S^{n-n^{\prime}}$ where $n>n^{\prime} \geq 1$ be integers, we can construct an example of special generic maps into ${\mathbb{R}}^{n^{\prime}+1}$ satisfying the conditions in Example \ref{lem:1} and represented as a composition of an embedding into ${\mathbb{R}}^{n+1}$ with the canonical projection. First take a natural embedding into ${\mathbb{R}}^{n+1}$ and compose the embedding with the canonical projection. By applying Lemma \ref{lem:1}, we have a round fold map into ${\mathbb{R}}^{n^{\prime}}$. Thus we can take such manifolds as $F_j$ in Main Theorem \ref{mainthm:2}.

Related to this, with a little effort, we can find manifolds admitting special generic maps appearing in the summand here in \cite{kitazawa0.4}, \cite{kitazawa0.5}, \cite{kitazawa0.6}, and so on.
\section{A proof of Main Theorem \ref{mainthm:3} and related expositions.}
\begin{Thm}
\label{thm:3}
In Theorem \ref{thm:1}, let $(m,n)=(7,4)$ and for any $m$-dimensional homotopy sphere $M$, we have a round fold map into ${\mathbb{R}}^n$ satisfying the properties.
Furthermore, we can take $l=1$ if and only if $M$ is represented as the total space of a bundle over $S^4$ whose fiber is diffeomorphic to $S^3$ and $l \geq 2$ for any $m$-dimensional homotopy sphere.
\end{Thm}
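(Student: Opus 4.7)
The plan is to apply Theorem \ref{thm:1} to 7-dimensional homotopy spheres, using the classical Eells--Kuiper realization theorem that every 7-dimensional homotopy sphere is diffeomorphic to the total space of a linear $S^3$-bundle over $S^4$.

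For the first assertion (existence of a round fold map), given a 7-dimensional homotopy sphere $M$, the Eells--Kuiper classification provides a diffeomorphism between $M$ and the total space of some $S^3$-bundle over $S^4$. Thus $M$ satisfies the hypothesis of Theorem \ref{thm:1} with $(m,n) = (7,4)$ and $l = 1$, and Theorem \ref{thm:1} immediately produces the desired round fold map. For the ``$l=1$ if and only if'' characterization, the forward direction is essentially a tautology from the hypothesis of Theorem \ref{thm:1}: a connected sum of exactly one $S^3$-bundle total space over $S^4$ is just such a total space itself. The converse is the content of the preceding sentence.

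To see that $l \geq 2$ can be realized for every 7-dimensional homotopy sphere $M$, observe that $S^7$ is itself the total space of an $S^3$-bundle over $S^4$ (for instance, the Hopf fibration), while connected sum with $S^7$ is the identity on oriented diffeomorphism classes of closed 7-manifolds. Hence for any $l \geq 2$ we may represent $M$ as a connected sum of $l$ such bundle total spaces (one copy of the Eells--Kuiper model for $M$ together with $l-1$ copies of $S^7$), and Theorem \ref{thm:1} applies with that value of $l$.

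The only substantive external input needed is the Eells--Kuiper realization theorem, and the hardest step in the write-up will be giving a precise citation for it; the remaining steps are direct consequences of Theorem \ref{thm:1} combined with the dual role of $S^7$ as a neutral element for connected sum and as a genuine $S^3$-bundle over $S^4$ via the Hopf fibration.
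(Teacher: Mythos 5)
Your key external input is false, and it is precisely the point the theorem turns on. You invoke an ``Eells--Kuiper realization theorem'' asserting that every $7$-dimensional homotopy sphere is diffeomorphic to the total space of a linear $S^3$-bundle over $S^4$. Eells and Kuiper proved the opposite: their $\mu$-invariant obstructs such fiberings, and of the $28$ oriented diffeomorphism classes of homotopy $7$-spheres only $16$ (among them $S^7$) arise as total spaces of $S^3$-bundles over $S^4$ --- the paper records exactly this fact in the sentence following the statement of Theorem \ref{thm:3}. With your premise, your argument would produce $l=1$ for \emph{every} homotopy $7$-sphere, contradicting the ``only if'' clause of the very statement being proved; the whole content of the theorem is that the $12$ non-bundle classes cannot be handled with $l=1$ but can be handled with $l\geq 2$. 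The same false premise also undermines your $l\geq 2$ step, since you write $M$ as a connected sum of ``the Eells--Kuiper model for $M$'' with copies of $S^7$, which presupposes again that $M$ itself is a bundle total space.

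The argument the paper relies on runs instead as follows. The classes realized by $S^3$-bundles over $S^4$ include a generator of the group $\Theta_7\cong\mathbb{Z}/28$ of homotopy $7$-spheres (Milnor's exotic spheres), so every homotopy $7$-sphere is a connected sum of finitely many such bundle total spaces, and, padding with copies of $S^7$ (which is the Hopf $S^3$-bundle over $S^4$, as you note), one can arrange $l\geq 2$ for every homotopy $7$-sphere; Theorem \ref{thm:1} then yields the round fold maps. The value $l=1$ is available exactly for the $16$ bundle classes. For the ``only if'' direction, note also that your tautological reading is legitimate only if ``we can take $l=1$'' refers to the hypothesis of Theorem \ref{thm:1} (the connected-sum representation of $M$); if it is read as the existence of the resulting round fold map with two singular-set components and sphere preimages, one needs the converse characterization from the author's earlier work on round fold maps, namely that such a map forces $M$ to be the total space of an $S^{m-n}$-bundle over $S^n$, which your proposal does not address.
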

$7$-dimensional oriented homotopy spheres of exactly $16$ of all the $28$ types of $7$-dimensional oriented homotopy spheres is represented as the total space of a bundle over $S^4$ whose fiber is diffeomorphic to $S^3$. The unit sphere satisfies this property.
According to \cite{calabi}, \cite{saeki}, \cite{saeki2}, and so on, a $7$-dimensional homotopy sphere admits a special generic map into ${\mathbb{R}}^n$ if and only if it is diffeomorphic to the unit sphere $S^7$ for $4 \leq n \leq 6$.
This means that maps in Theorem \ref{thm:2} cannot be obtained via theory of section 6 of \cite{saekisuzuoka} (if $M$ is not diffeomorphic to $S^7$).
\begin{proof}[A proof of Main Theorem \ref{mainthm:3}.]
Existence of the family of infinitely many manifolds satisfying the first three properties is due to \cite{kitazawa0.11}
 with \cite{wang}. The integral cohomology rings are isomorphic to that of ${\mathbb{C}P}^2 \times S^3$.
 If at least one of the remaining two properties is false, then the cohomology ring of a manifold in the family must be isomorphic to that of a manifold represented as a connected sum of $S^2 \times S^5$ and $S^3 \times S^4$.
 This completes the proof.
\end{proof}
\begin{Rem}
\label{rem:3}
\cite{kitazawa0.7}--\cite{kitazawa0.9} present fold maps such that preimages of regular values are disjoint unions of copies of a standard sphere we can represent as compositions  of special generic maps with canonical projections of Euclidean spaces. This is a new explicit topic on so-called lifts of smooth maps. 

In these studies, for a smooth map, we construct an explicit good smooth map such that the composition with a canonical projection of Euclidean space is the given map. Before the presented preprints by the author appeared, construction of explicit immersions and embeddings had been studied in various cases. \cite{haefliger} is one of pioneering studies. Later, related studies such as \cite{blankcurley}, \cite{levine}, \cite{nishioka}, \cite{saito}, \cite{saekitakase}, \cite{yamamoto}, and so on, have been published. 
\end{Rem}
\begin{Rem}
Special generic maps presented in the present paper and the class of such maps have been concentrated on and studied systematically in \cite{kitazawa0.12} for example.
\end{Rem}
\section{Acknowledgement.}
The author is a member of and supported by JSPS KAKENHI Grant Number JP17H06128 "Innovative research of geometric topology and singularities of differentiable mappings". The author declares that data concerning the present study directly are available within the present paper.


\begin{thebibliography}{30}
\bibitem{asano} N. Asano, \textsl{Vertical 3-manifolds in simplified $(2,0)$-trisections of $4$-manifolds}, arXiv:2010.08239.
\bibitem{baykursaeki} R. I. Baykur and O.Saeki, \textsl{Simplified broken Lefschetz fibrations and trisections of $4$-manifolds}, PNAS 115 , no. 43, (2018), 10894--10900.  
\bibitem{blankcurley} S. J. Blank and C. Curley, \textsl{Desingularizing maps of corank one}, Proc. Amer. Math. Soc. 80, (1980), 483--486.
\bibitem{calabi} E. Calabi, Quasi-surjective mappings and a generalization of Morse theory, Proc. U.S.-Japan Seminar in Differential Geometry, Kyoto, (1965), pp. 13--16.
\bibitem{fukuda} T. Fukuda, \textsl{Topology of folds, cusps, and Morin singularities}, A F\^ete of Topology, Papers Dedicated to Itiro Tamura, (1988), 331--353.
\bibitem{gaykirby} D. Gay and R. Kirby, \textsl{Trisecting $4$-manifolds}, Geom. \& Topol., vol. 20, no. 6, (2016), 3097--3132.
\bibitem{golubitskyguillemin} M. Golubitsky and V. Guillemin, \textsl{Stable mappings and their singularities}, Graduate Texts in Mathematics (14), Springer-Verlag (1974).
\bibitem{haefliger} A. Haefliger, \textsl{Quelques remarques sur les applications diff\'{e}rentiables d'une surface dans le plan}, Ann. Inst Fourier, Grenoble 10 (1960), 47--60.
\bibitem{kitazawa0.1} N. Kitazawa, \textsl{On round fold maps} (in Japanese), RIMS Kokyuroku Bessatsu B38, (2013), 45--59.
\bibitem{kitazawa0.2} N. Kitazawa, \textsl{On manifolds admitting fold maps with singular value sets of concentric spheres}, Doctoral Dissertation, Tokyo Institute of Technology, (2014).
\bibitem{kitazawa0.3} N. Kitazawa, \textsl{Fold maps with singular value sets of concentric spheres}, Hokkaido Mathematical Journal Vol.43, No.3, (2014), 327--359.
\bibitem{kitazawa0.4} N. Kitazawa, \textsl{Constructions of round fold maps on smooth bundles}, Tokyo J. of Math. Volume 37, Number 2, 385--403, (2014), arxiv:1305.1708.
\bibitem{kitazawa0.5} N. Kitazawa, \textsl{Constructions of round fold maps on circle bundles}, arxiv:1307.7270.
\bibitem{kitazawa0.6} N. Kitazawa, \textsl{Round fold maps on manifolds regarded as the total spaces of linear and more general bundles}, arxiv:1503.05875.
\bibitem{kitazawa0.7} N. Kitazawa, \textsl{Lifts of spherical Morse functions}, submitted to a refereed journal, arxiv:1805.05852.
\bibitem{kitazawa0.8} N. Kitazawa, \textsl{Generalizations of Reeb spaces of special generic maps andapplications to a problem of lifts of smooth maps}, arxiv:1805.07783.
\bibitem{kitazawa0.9} N. Kitazawa, \textsl{A new explicit way of obtaining special generic maps into the $3$-dimensional Euclidean space}, arxiv:1306.04581.
\bibitem{kitazawa0.10} N. Kitazawa, \textsl{Round fold maps and the topologies and the differentiable structures of manifolds admitting explicit ones}, submitted to a refereed journal, arXiv:1304.0618 (the title has changed).%
\bibitem{kitazawa0.11} N. Kitazawa, \textsl{7-dimensional simply-connected spin manifolds whose integral cohomology rings are isomorphic to that of ${\mathbb{C}P}^2 \times S^3$ admit round fold maps}, submitted to a refereed journal, arxiv:2007.03474v6.
\bibitem{kitazawa0.12} N. Kitazawa, \textsl{The topologies and the differentiable structures of the images of special generic maps having simple structures}, submitted to a refereed journal, arXiv:2104.03513.
%
\bibitem{levine} H. Levine, \textsl{Classifying immersions into ${\mathbb{R}}^4$ over stable maps of $3$-manifolds into ${\mathbb{R}}^2$}, Lecture Notes in Math., Vol. 1157,
 Springer-Verlag, Berlin, (1985).
\bibitem{mather} J. N. Mather, \textsl{Generic Projections}, Ann. of Math. Second Series, Vol. 98, No. 2, (1973), 226--245.
\bibitem{nishioka} M. Nishioka, \textsl{Desingularizing special generic maps into $3$-dimensional space}, PhD Thesis (Kyushu Univ.), arxiv:1603.04520v1.
\bibitem{saito} Y. Saito, \textsl{On decomposable mappings of manifolds}, J. Math. Kyoto Univ. 1 (1961/1962), 425--455.
\bibitem{saeki} O. Saeki, \textsl{Topology of special generic maps of manifolds into Euclidean spaces}, Topology Appl. 49 (1993), 265--293.
\bibitem{saeki2} O. Saeki, \textsl{Topology of special generic maps into $\mathbb{R}^3$}, Workshop on Real and Complex Singularities (Sao Carlos, 1992), Mat. Contemp. 5 (1993), 161--186.
\bibitem{saekisuzuoka} O. Saeki and K. Suzuoka, \textsl{Generic smooth maps with sphere fibers} J. Math. Soc. Japan Volume 57, Number 3 (2005), 881--902.
\bibitem{saekitakase} O. Saeki and M. Takase, \textsl{Desingularizing special generic maps}, Journal of Gokova Geometry Topology 7 (2013), 1--24.
\bibitem{wang} X. Wang \textsl{On the classification of certain $1$-connected $7$-manifolds and related problems}, arXiv:1810.08474.
\bibitem{wrazidlo} D. J. Wrazidlo, \textsl{Standard special generic maps of homotopy spheres into Eucidean spaces}, Topology Appl. 234 (2018), 348--358, arxiv:1707.08646.
\bibitem{yamamoto} M. Yamamoto, \textsl{Lifting a generic map of a surface into the plane to an embedding into $4$-space}, Illinois Journal of Mathematics 51 (2007), 705--721. 
\end{thebibliography}
\end{document}